\newtheorem{thm}{Theorem}[section]
\newtheorem{lem}[thm]{Lemma}
\newtheorem{prop}[thm]{Proposition}
\theoremstyle{definition}
\newtheorem{rem}[thm]{Remark}
\newtheorem{defn}[thm]{Definition}
\newtheorem{ex}[thm]{Example}
\theoremstyle{remark}
\numberwithin{equation}{section}
\def\tr{\text{\rm tr}}
\def\dR{\text{\rm dR}}
\def\HT{\text{\rm HT}}
\def\BK{\text{\rm BK}}
\def\Sen{\text{\rm Sen}}
\def\cris{\text{\rm cris}}
\def\log{\text{\rm log}}
\def\gr{\text{\rm gr}}
\def\st{\text{\rm st}}
\def\Frac{\text{\rm Frac}}
\def\Fil{\text{\rm Fil}}
\def\dim{\text{\rm dim}}
\def\Gal{\text{\rm Gal}}
\def\sup{\text{\rm sup}}
\def\Ker{\text{\rm Ker}\,}
\def\lim{\text{\rm lim}}
\begin{document}

\title[Generalization of the theory of Sen]
{Generalization of the theory of Sen in the semi-stable representation case}

\author[Kazuma Morita]{Kazuma Morita}
\address{Department of Mathematics, Hokkaido University, Sapporo 060-0810, Japan}
\email{morita@math.sci.hokudai.ac.jp}

\subjclass{ 
11F80, 12H25, 14F30.
}
\keywords{ 
$p$-adic Galois representation, $p$-adic cohomology, $p$-adic differential equation.}
\date{\today}

\maketitle
\begin{abstract}For a semi-stable representation $V$, we will construct a subspace $D_{\pi\text{-}\Sen}(V)$ of $\mathbb{C}_{p}\otimes_{\mathbb{Q}_{p}}V$ endowed with a linear derivation $\nabla^{(\pi)}$. The action of $\nabla^{(\pi)}$ on $D_{\pi\text{-}\Sen}(V)$ is closely related to the action of the monodromy operator $N$ on $D_{\st}(V)$. Furthermore, in the geometric case, the action of $\nabla^{(\pi)}$ on $D_{\pi\text{-}\Sen}(V)$ describes an analogy of the infinitesimal variations of Hodge structures and satisfies formulae similar to the Griffiths transversality and the local monodromy theorem.
\end{abstract}
\section{Introduction}
Let $K$ be a complete discrete valuation field of characteristic $0$ with 
perfect residue field $k$ of characteristic $p > 0$. Choose an
algebraic closure $\overline{K}$ of $K$ and consider its $p$-adic completion $\mathbb{C}_{p}$. By a $p$-adic representation
of $G_K=\Gal(\overline{K}/K)$, we mean a finite dimensional vector space $V$ over $\mathbb{Q}_p$ endowed with a continuous
action of $G_K$.
Put $K_{\infty}=\cup_{0\leq m}K(\zeta_{p^{m}})$ where $\zeta_{p^{m}}$ denote a primitive $p^{m}$-th root of unity in $\overline{K}$ satisfying $(\zeta_{p^{m+1}})^{p}=\zeta_{p^{m}}$. Let $H_{K}$ denote the kernel of the cyclotomic character $\chi:G_{K}\rightarrow \mathbb{Z}_{p}^{*}$ and define $\Gamma_{K}$ to be $G_{K}/H_{K}\simeq \Gal(K_{\infty}/K)$. 
Then, for a $p$-adic representation $V$ of $G_{K}$, Sen constructs a $K_{\infty}$-vector space $D_{\Sen}(V)$ of dimension $\dim_{\mathbb{Q}_{p}}V$  in $(\mathbb{C}_{p}\otimes_{\mathbb{Q}_{p}}V)^{H_{K}}$ equipped with the $K_{\infty}$-linear derivation $\nabla^{(0)}$ which is the $p$-adic Lie algebra of $\Gamma_{K}$.
In the case when $V$ is a Hodge-Tate representation of $G_{K}$, the set of eigenvalues of $\nabla^{(0)}$ on $D_{\Sen}(V)$ is exactly the same as the set of Hodge-Tate weights of $V$.

Now, we shall state the aim of this article. First, let us fix some notations. Fix a prime $\pi$ of $\mathscr{O}_{K}$ (the ring of integers of $K$) and for each $1\leq m$, fix a $p^{m}$-th root $\pi^{1/p^{m}}$ of $\pi$ in $\overline{K}$ satisfying $(\pi^{1/p^{m+1}})^{p}=\pi^{1/p^{m}}$. Put 
$K^{\BK}=\cup_{0\leq m}K(\pi^{1/p^{m}}) \ \text{and} \  K_{\infty}^{\BK}=\cup_{0\leq m}K_{\infty}(\pi^{1/p^{m}})$. Here, the letter BK stands for the Breuil-Kisin extension.
Let $H$ denote the Galois group $\Gal(\overline{K}/K_{\infty}^{\BK})$ and define $\Gamma_{\BK}$ to be   $\Gal(K_{\infty}^{\BK}/K_{\infty})$.
Then, we have an isomorphism of profinite groups $G_{K}/H\simeq \Gamma_{K}\ltimes \Gamma_{\BK}.$
In this article, for a semi-stable representation $V$ of $G_{K}$, we shall construct a $K_{\infty}^{\BK}$-vector space $D_{\pi\text{-}\Sen}(V)$ of dimension $\dim_{\mathbb{Q}_{p}}V$ in $(\mathbb{C}_{p}\otimes_{\mathbb{Q}_{p}}V)^{H}$ equipped with the $K_{\infty}^{\BK}$-linear derivations $\nabla^{(0)}$ and $\nabla^{(\pi)}$. Here, $\nabla^{(\pi)}$ denotes the $p$-adic Lie algebra of $\Gamma_{\BK}$.
Then, the action of $\nabla^{(0)}$ on $D_{\pi\text{-}\Sen}(V)$ tells us about Hodge-Tate weights as in the case of $D_{\Sen}(V)$ and the action of $\nabla^{(\pi)}$ on $D_{\pi\text{-}\Sen}(V)$ is closely related to the action of the monodromy operator $N$ on $D_{\st}(V)$. Furthermore, in the case $V=H_{\text{\'et}}^{m}(X\otimes_{K}\overline{K},\mathbb{Q}_{p})$ where $X$ denotes a proper smooth scheme over $K$, the action of $\nabla^{(\pi)}$ on $D_{\pi\text{-}\Sen}(V)$ describes an analogy of the infinitesimal variations of Hodge structures and satisfies formulae similar to the Griffiths transversality and the local monodromy theorem. 

{\bf Acknowledgments} 
The author would like to thank his advisor Professor
Kazuya Kato for continuous advice, encouragements and patience. 
He is also grateful to Professor Masanori Asakura, Olivier Brinon and Takeshi Tsuji for useful discussions.
A part of this work was done while he was staying at Universit\'e Paris-Sud 11 and
he thanks this institute for the hospitality.
His staying at Universit\'e Paris-Sud 11 was partially 
supported by JSPS Core-to-Core Program
``New Developments of Arithmetic Geometry, Motives, Galois Theory, and Their Practical Applications''
and he thanks Professor Makoto Matsumoto for encouraging this visiting.   
This research was partially supported by JSPS Research Fellowships for Young Scientists.
 
\section{Preliminaries on basic facts} 
\subsection{$p$-adic periods rings and $p$-adic representations}
(See [F1] for details.) 
Let $K$ be a complete discrete valuation field of characteristic $0$ with perfect residue field $k$ of characteristic $p>0$.
Put $K_{0}=\Frac(W(k))$ where $W(k)$ denotes the ring of Witt vectors with coefficients in $k$. 
Choose an algebraic closure $\overline{K}$ of $K$ and consider its $p$-adic completion $\mathbb{C}_p$.
Put 
$$\widetilde{\mathbb{E}}=\underleftarrow{\lim}_{x\mapsto x^{p}} \mathbb{C}_p=\verb+{+(x^{(0)},x^{(1)},...)\ |\ (x^{(i+1)})^p=x^{(i)},  x^{(i)}\in\mathbb{C}_{p} \verb+}+.$$
For two elements $x=(x^{(i)})$ and $y=(y^{(i)})$ of $\widetilde{\mathbb{E}}$, define their sum and product by
$(x+y)^{(i)}=\lim_{j\rightarrow\infty}(x^{(i+j)}+y^{(i+j)})^{p^j}$ and $(xy)^{(i)}=x^{(i)}y^{(i)}.$
Let $\epsilon=(\epsilon^{(n)})$ denote an element of $\widetilde{\mathbb{E}}$ such that $\epsilon^{(0)}=1$ and $\epsilon^{(1)}\not =1$.
Then, $\widetilde{\mathbb{E}}$ is a perfect field of characteristic $p>0$  and is the completion of an algebraic closure of $k((\epsilon-1))$ for the valuation defined by $v_{\mathbb{E}}(x)=v_{p}(x^{(0)})$ where $v_{p}$ denotes the $p$-adic valuation of $\mathbb{C}_{p}$ normalized by $v_{p}(p)=1$. The field $\widetilde{\mathbb{E}}$ is equipped with an action of a Frobenius $\sigma$ and a continuous action
of the Galois group $G_K=\Gal(\overline{K}/K)$ with respect to the topology defined by the valuation $v_{\mathbb{E}}$. 
Define $\widetilde{\mathbb{E}}^+$ to be the ring of integers for this valuation.
Put $\widetilde{\mathbb{A}}^{+}=W(\widetilde{\mathbb{E}}^{+})$ and
$\widetilde{\mathbb{B}}^{+}=\widetilde{\mathbb{A}}^{+}[1/p]=\verb+{+\sum_{k\gg -\infty}   p^k[x_k]\ |\ x_k\in \widetilde{\mathbb{E}}^{+}\verb+}+ $
where $[*]$ denotes the Teichm\"uller lift of $*\in \widetilde{\mathbb{E}}^+$.
This ring $\widetilde{\mathbb{B}}^{+}$ is equipped with a surjective homomorphism 
$$\theta:\widetilde{\mathbb{B}}^{+}\twoheadrightarrow\mathbb{C}_{p}:\quad  \sum p^k[x_k]\mapsto   \sum p^k x_{k}^{(0)}.$$
Let $\widetilde{p}$ denote $(p^{(n)})\in \widetilde{\mathbb{E}}^{+}$ such that $p^{(0)}=p$.  Then, $\Ker(\theta)$ is the principal ideal generated by $\omega=[\widetilde{p}]-p$ .
The ring $B_{\dR}^{+}$ is defined to be the $\Ker(\theta)$-adic completion of $\widetilde{\mathbb{B}}^{+}$
$$B_{\dR}^{+}=\underleftarrow{\lim}_{n\geq 0}\widetilde{\mathbb{B}}^{+}/(\Ker(\theta)^n).$$
This is a discrete valuation ring and $t=\log([\epsilon])$ which converges in $B_{\dR}^{+}$ is a generator of the maximal ideal.
Put $B_{\dR}=B_{\dR}^{+}[1/t].$
The ring $B_{\dR}$ becomes a field and is equipped with an action of the Galois group $G_{K}$ and a filtration defined by $\Fil^{i}{B_{\dR}}=t^{i}B_{\dR}^{+}$ ($i\in \mathbb{Z}$).
Then, $(B_{\dR})^{G_{K}}$ is canonically isomorphic to $K$. Thus, for a  $p$-adic representation $V$ of $G_{K}$,  $D_{\dR}(V)=({B_{\dR}}\otimes _{\mathbb{Q}_p}V)^{G_{K}}$ is naturally a $K$-vector space. We say that a $p$-adic representation $V$ of $G_{K}$ is a de Rham representation of $G_{K}$ if we have
$$\dim_{\mathbb{Q}_p}V=\dim_{K}D_{\dR}(V)\quad \text{(we always have $\dim_{\mathbb{Q}_p}V\geq \dim_{K}D_{\dR}(V)$)}.$$

Define $B_{\HT}$ to be the associated graded algebra to the filtration $\Fil^{i}B_{\dR}$.
The quotient $\gr^{i}B_{\HT} = \Fil^{i}B_{\dR}/\Fil^{i+1}B_{\dR}$ $(i \in \mathbb{Z})$ is a one-dimensional
$\mathbb{C}_p$-vector space spanned by the image of $t^i$. Thus, we obtain the presentation
$$B_{\HT} =\bigoplus_{i\in\mathbb{Z}} \mathbb{C}_{p}(i)$$
where $\mathbb{C}_p(i) = \mathbb{C}_p 
 \otimes \mathbb{Z}_p(i)$ is the Tate twist. Then, $(B_{\HT})^{G_K}$ is canonically
isomorphic to $K$. Thus, for a $p$-adic representation $V$ of $G_K$, $D_{\HT}(V )=
(B_{\HT}\otimes _{\mathbb{Q}_{p}}V)^{G_{K}}$ 
is naturally a $K$-vector space. We say that a $p$-adic representation
$V$ of $G_K$ is a Hodge-Tate representation of $G_K$ if we have
$$\dim_{\mathbb{Q}_{p}}V = \dim_{K}D_{\HT}(V )\quad  (\text{we always have $\dim_{\mathbb{Q}_p}V \geq  \dim_{K}D_{\HT}(V )$}).$$

Let $\theta:\widetilde{\mathbb{A}}^{+}\rightarrow \mathscr{O}_{\mathbb{C}_{p}}$ be the natural homomorphism where $\mathscr{O}_{\mathbb{C}_{p}}$ denotes the ring of integers of $\mathbb{C}_{p}$. 
Define the ring $A_{\cris}$ to be the $p$-adic completion of the PD-envelope of $\Ker(\theta)$ compatible with the canonical PD-envelope over the ideal generated by $p$.
Put $B_{\cris}^{+}=A_{\cris}[1/p]$ and $B_{\cris}=B_{\cris}^{+}[1/t]$.
These rings are $K_{0}$-algebras endowed with actions of $G_{K}$ and Frobenius $\varphi$. 
Furthermore, since these rings are canonically included in $B_{\dR}$, they are endowed with the filtration induced by that of $B_{\dR}$. 
Then, $(B_{\cris})^{G_{K}}$ is canonically isomorphic to $K_{0}$. Thus, for a $p$-adic representation $V$ of $G_{K}$, $D_{\cris}(V)=(B_{\cris}\otimes_{\mathbb{Q}_{p}}V)^{G_{K}}$ is naturally a $K_{0}$-vector space. We say that a $p$-adic representation $V$ of $G_{K}$ is a crystalline representation of $G_{K}$ if we have 
$$\dim_{\mathbb{Q}_p}V=\dim_{K_{0}}D_{\cris}(V)\quad \text{(we always have $\dim_{\mathbb{Q}_p}V\geq \dim_{K_{0}}D_{\cris}(V)$)}.$$

Fix a prime element $\pi$ of $\mathscr{O}_{K}$ (the ring of integers of $K$) and an element $s=(s^{(n)})\in \widetilde{\mathbb{E}}^{+}$ such that $s^{(0)}=\pi$.
Then, the series $\log([s]\pi^{-1})$ converges  to an element $u_{\pi}$ in $B_{\dR}^{+}$ and the subring $B_{\cris}[u_{\pi}]$ of $B_{\dR}$ depends only on the choice of $\pi$.
We denote this ring by $B_{\st}$. Since this ring is included in $B_{\dR}$, it is endowed with the action of $G_{K}$ and the filtration induced by those on $B_{\dR}$. 
The element $u_{\pi}$ is transcendental over $B_{\cris}$ and we extend the Frobenius $\varphi$ on  $B_{\cris}$ to  $B_{\st}$ by putting $\varphi(u_{\pi})=pu_{\pi}$. 
Furthermore, define the $B_{\cris}$-derivation $B_{\st}\rightarrow B_{\st}$ by $N(u_{\pi})=-1$.
It is easy to verify that we have $N\varphi=p\varphi N$ and that the action of $N$ on $D_{\st}(V)$ is nilpotent.
As in the case of $B_{\cris}$, we have $(B_{\st})^{G_{K}}=K_{0}$. Thus, for a $p$-adic representation $V$ of $G_{K}$,  $D_{\st}(V)=(B_{\st}\otimes_{\mathbb{Q}_{p}}V)^{G_{K}}$ is naturally a $K_{0}$-vector space. We say that a $p$-adic representation $V$ of $G_{K}$ is a semi-stable representation of $G_{K}$ if we have 
$$\dim_{\mathbb{Q}_p}V=\dim_{K_{0}}D_{\st}(V)\quad \text{(we always have $\dim_{\mathbb{Q}_p}V\geq \dim_{K_{0}}D_{\st}(V)$)}.$$
Furthermore, we say that $V$ is a potentially semi-stable representation of $G_{K}$ if there exists a finite field extension $L/K$ in $\overline{K}$ such that $V$ is a semi-stable representation of $G_{L}$. Due to the result of  Berger [Be1], it is known that $V$ is a potentially semi-stable representation of $G_{K}$ if and only if $V$ is a de Rham representation of $G_{K}$.
\subsection{The theory of Sen}
Keep the notation and assumption in Introduction. In the article [S3], Sen shows that, for a $p$-adic representation $V$ of $G_{K}$, the $\hat{K}_{\infty}(=(\mathbb{C}_{p})^{H_{K}})$-vector space $(\mathbb{C}_{p}\otimes_{\mathbb{Q}_{p}}V)^{H_{K}}$ has dimension $d  =\dim_{\mathbb{Q}_{p}}V$ and the union of the finite dimensional $K_{\infty}$-subspaces of $(\mathbb{C}_{p}\otimes _{\mathbb{Q}_{p}}V)^{H_{K}}$ stable under $\Gamma_{K}$ is a $K_{\infty}$-vector space of dimension $d$ stable under $\Gamma_{K}$ (called $D_{\Sen}(V)$).
We have $\mathbb{C}_{p}\otimes_{K_{\infty}}D_{\Sen}(V) =\mathbb{C}_{p}\otimes_{\mathbb{Q}_{p}}V$ and the natural map $\hat{K}_{\infty}\otimes_{K_{\infty}}D_{\Sen}(V)\rightarrow (\mathbb{C}_{p}\otimes_{\mathbb{Q}_{p}}V)^{H_{K}}$ is an isomorphism. Furthermore, if $\gamma\in \Gamma_K$ is close enough to $1$, then the series of operators on $D_{\Sen}(V)$
$$\frac{\log(\gamma)}{\log(\chi(\gamma))}=-\frac{1}{\log(\chi(\gamma))}\sum_{k\geq 1}\frac{(1-\gamma)^{k}}{k}$$
converges to a $K_{\infty}$-linear derivation $\nabla^{(0)}:D_{\Sen}(V)\rightarrow D_{\Sen}(V)$ and does not depend on the choice of $\gamma$. By the following proposition, we can see that the set of eigenvalues of $\nabla^{(0)}$ on $D_{\Sen}(V)$ is exactly the same as the set of Hodge-Tate weights of $V$ if $V$ is a Hodge-Tate representation of $G_{K}$. 
\begin{prop}
If $V$ is a Hodge-Tate representation of $G_{K}$, there exists a $\Gamma_{K}$-equivariant isomorphism of $K_{\infty}$-vector spaces
$$D_{\Sen}(V)\simeq \bigoplus _{j=1}^{d=\dim_{\mathbb{Q}_{p}}V}K_{\infty}(n_{j})\quad (n_{j}\in\mathbb{Z}).$$
\end{prop}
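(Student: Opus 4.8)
The plan is to reduce the statement to the rank-one computation $D_{\Sen}(\mathbb{Q}_{p}(n))=K_{\infty}(n)$, using that $D_{\Sen}(V)$ depends only on the $\mathbb{C}_{p}$-semilinear $G_{K}$-representation $\mathbb{C}_{p}\otimes_{\mathbb{Q}_{p}}V$ and that the whole construction commutes with finite direct sums.

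\textbf{Step 1: Hodge-Tate decomposition of $\mathbb{C}_{p}\otimes_{\mathbb{Q}_{p}}V$.} Since $V$ is Hodge-Tate, the canonical injective, $G_{K}$-equivariant map of graded $B_{\HT}$-modules $B_{\HT}\otimes_{K}D_{\HT}(V)\hookrightarrow B_{\HT}\otimes_{\mathbb{Q}_{p}}V$ is an isomorphism (this is precisely the dimension equality $\dim_{\mathbb{Q}_{p}}V=\dim_{K}D_{\HT}(V)$: the map is injective for any $V$, and in each graded degree it goes from a $\dim_{K}D_{\HT}(V)$-dimensional $\mathbb{C}_{p}$-vector space to a $d$-dimensional one). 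Passing to the degree-$0$ part and using $\gr^{\bullet}B_{\HT}=\bigoplus_{i\in\mathbb{Z}}\mathbb{C}_{p}(i)$ together with the grading on $D_{\HT}(V)$ induced by $\Fil^{\bullet}B_{\dR}$, one gets a $\mathbb{C}_{p}$-linear, $G_{K}$-equivariant isomorphism $\mathbb{C}_{p}\otimes_{\mathbb{Q}_{p}}V\simeq\bigoplus_{i\in\mathbb{Z}}\mathbb{C}_{p}(i)^{\oplus d_{i}}$, with $d_{i}=\dim_{K}\gr^{-i}D_{\HT}(V)\in\mathbb{Z}_{\geq 0}$ and $\sum_{i}d_{i}=d$. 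Relabelling the summands, this says $\mathbb{C}_{p}\otimes_{\mathbb{Q}_{p}}V\simeq\mathbb{C}_{p}\otimes_{\mathbb{Q}_{p}}V'$ as $\mathbb{C}_{p}[G_{K}]$-modules, where $V'=\bigoplus_{j=1}^{d}\mathbb{Q}_{p}(n_{j})$ with $n_{j}\in\mathbb{Z}$ (the integer $i$ occurring among the $n_{j}$ with multiplicity $d_{i}$).

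\textbf{Step 2: transport via $D_{\Sen}$.} By construction $D_{\Sen}(W)$ is the union of the finite-dimensional $K_{\infty}$-subspaces of $(\mathbb{C}_{p}\otimes_{\mathbb{Q}_{p}}W)^{H_{K}}$ stable under $\Gamma_{K}$, hence is a functor of the $\mathbb{C}_{p}$-semilinear $G_{K}$-representation $\mathbb{C}_{p}\otimes_{\mathbb{Q}_{p}}W$ alone, and both $(-)^{H_{K}}$ and the passage to this union commute with finite direct sums. Therefore the $G_{K}$-equivariant isomorphism of Step 1 induces a $\Gamma_{K}$-equivariant isomorphism of $K_{\infty}$-vector spaces $D_{\Sen}(V)\simeq D_{\Sen}(V')\simeq\bigoplus_{j=1}^{d}D_{\Sen}(\mathbb{Q}_{p}(n_{j}))$.

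\textbf{Step 3: the rank-one case.} Let $e$ be the canonical basis of $\mathbb{Q}_{p}(n)$, so $g(e)=\chi(g)^{n}e$ for $g\in G_{K}$. Then $H_{K}=\Ker\chi$ fixes $1\otimes e\in\mathbb{C}_{p}(n)$, so $(\mathbb{C}_{p}(n))^{H_{K}}=\hat{K}_{\infty}\cdot(1\otimes e)$, and $K_{\infty}\cdot(1\otimes e)$ is a $K_{\infty}$-line stable under $\Gamma_{K}$ because $\chi(\gamma)^{n}\in\mathbb{Z}_{p}^{\ast}\subset K_{\infty}$. Since $\dim_{\mathbb{Q}_{p}}\mathbb{Q}_{p}(n)=1$, Sen's theorem forces $D_{\Sen}(\mathbb{Q}_{p}(n))=K_{\infty}\cdot(1\otimes e)$, on which $\gamma\in\Gamma_{K}$ acts by $\gamma(a\otimes e)=\chi(\gamma)^{n}\gamma(a)\otimes e$; this is exactly the $\Gamma_{K}$-module $K_{\infty}(n)=K_{\infty}\otimes_{\mathbb{Z}_{p}}\mathbb{Z}_{p}(n)$. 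Feeding this into Step 2 gives the asserted $\Gamma_{K}$-equivariant isomorphism $D_{\Sen}(V)\simeq\bigoplus_{j=1}^{d}K_{\infty}(n_{j})$.

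\textbf{Main obstacle.} Steps 2 and 3 are formal once the construction of $D_{\Sen}$ is granted; the only substantial point is Step 1, i.e.\ that a Hodge-Tate $V$ becomes, after $\otimes_{\mathbb{Q}_{p}}\mathbb{C}_{p}$, a direct sum of Tate twists with the summands genuinely split off in a $G_{K}$-equivariant fashion. This rests on the Tate-Sen computation of the Galois cohomology of the $\mathbb{C}_{p}(i)$ — in particular $(\mathbb{C}_{p}(i))^{G_{K}}=0$ for $i\neq 0$ and $(\mathbb{C}_{p})^{G_{K}}=K$ — which is what makes the graded comparison map an isomorphism as soon as the two dimensions coincide. (Alternatively one could work directly on $D_{\Sen}(V)$: Hodge-Tate-ness makes the Sen operator $\nabla^{(0)}$ semisimple with integer eigenvalues, so one decomposes $D_{\Sen}(V)$ into its $\nabla^{(0)}$-eigenspaces and identifies the $n$-eigenspace with a sum of copies of $K_{\infty}(n)$; but running the descent on each eigenspace needs the same cohomological input.)
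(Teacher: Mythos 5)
Your proof is correct and follows essentially the same route as the paper: both rest on the Hodge--Tate decomposition $\mathbb{C}_{p}\otimes_{\mathbb{Q}_{p}}V\simeq\bigoplus_{j}\mathbb{C}_{p}(n_{j})$ and then use Sen's dimension theorem to force the resulting $d$-dimensional $\Gamma_{K}$-stable $K_{\infty}$-subspace to be all of $D_{\Sen}(V)$. The paper simply exhibits the basis $\{g_{j}\}$ coming from the decomposition and compares dimensions directly, whereas you repackage the same argument as a reduction to the rank-one case via compatibility of $D_{\Sen}$ with direct sums; the content is identical.
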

\begin{proof}
Since $V$ is a Hodge-Tate representation of $G_{K}$, there exists a basis $\verb+{+g_j\verb+}+_{j=1}^{d}$ 
of $\mathbb{C}_{p}\otimes_{\mathbb{Q}_{p}} V$ over $\mathbb{C}_{p}$ such that it gives the Hodge-Tate decomposition
$$\mathbb{C}_{p}\otimes_{\mathbb{Q}_{p}}V\simeq \bigoplus_{j=1}^{d}\mathbb{C}_{p}(n_{j}):g_{j}\mapsto t^{n_{j}} \ (n_j\in\mathbb{Z}).$$
From this presentation, it follows that $\verb+{+g_j\verb+}+^{d}_{j=1}$ forms a basis of a $K_{\infty}$-vector
space $X$ which is contained in $(\mathbb{C}_{p}\otimes_{\mathbb{Q}_{p}}V)^{H_{K}}$ and stable under the action of $\Gamma_{K}$.
Then, since we have $X\hookrightarrow D_{\Sen}(V)$ by definition and both sides have the same
dimension $d$ over $K_{\infty}$, we get the equality $X = D_{\Sen}(V)$. Thus, we obtain the
$\Gamma_{K}$-equivariant isomorphism of $K_{\infty}$-vector spaces $D_{\Sen}(V)\simeq \bigoplus _{j=1}^{d}K_{\infty}(n_{j}):g_{j}\mapsto t^{n_{j}}$.
\end{proof}
\section{Generalization of $D_{\Sen}(V)$}
Let us recall notations. Fix a prime $\pi$ of $\mathscr{O}_{K}$ (the ring of integers of $K$) and for each $1\leq m$, fix a $p^{m}$-th root $\pi^{1/p^{m}}$ of $\pi$ in $\overline{K}$ satisfying $(\pi^{1/p^{m+1}})^{p}=\pi^{1/p^{m}}$. Put 
$K^{\BK}=\cup_{0\leq m}K(\pi^{1/p^{m}}) \ \text{and} \  K_{\infty}^{\BK}=\cup_{0\leq m}K_{\infty}(\pi^{1/p^{m}}).$
Let $H$ denote the Galois group $\Gal(\overline{K}/K_{\infty}^{\BK})$ and define $\Gamma_{\BK}$ to be   $\Gal(K_{\infty}^{\BK}/K_{\infty})$. Then, we have an isomorphism of profinite groups 
$$G_{K}/H\simeq \Gamma_{K}\ltimes \Gamma_{\BK}.$$  
For $\beta\in\Gamma_{\BK}$, we have $\beta(\zeta_{p^{m}})=\zeta_{p^{m}}$ and define the homomorphism
$c:\Gamma_{\BK} \rightarrow \mathbb{Z}_{p}$ such that we have $\beta(\pi^{1/p^{m}})=\pi^{1/p^{m}}\zeta_{p^{m}}^{c(\beta)}$. Then, the homomorphism
$c$ defines an isomorphism $\Gamma_{\BK}\simeq \mathbb{Z}_{p}$ of profinite groups. 
\subsection{Construction of $D_{\pi\text{-}\Sen}(V)$}
Let $V$ be a semi-stable representation of $G_{K}$. For simplicity, assume that the number of the nilpotent block of the monodromy operator $N$ on $D_{\st}(V)$ is $1$. In the general case, one can easily construct $D_{\pi\text{-}\Sen}(V)$ in the same way. Then, there exists a basis $\verb+{+g_{j}\verb+}+_{j}$ of $D_{\st}(V)$ over $K_{0}$ such that we have
$$g_{1}\overset{N}{\longrightarrow} g_{2}\overset{N}{\longrightarrow} \cdots \overset{N}{\longrightarrow} g_{d}\overset{N}{\longrightarrow}0.$$
By twisting $\verb+{+g_{j}\verb+}+_{j}$ by some powers of $t$ in $B_{\st}\otimes_{\mathbb{Q}_{p}}V$, we obtain a basis $\verb+{+f_{j}\verb+}+_{j}$ of $B_{\cris}^{+}[u_{\pi}/t]\otimes_{\mathbb{Q}_{p}}V$ over $B_{\cris}^{+}[u_{\pi}/t]$.
Then, we can write
$$(*)\begin{cases} \circ \ f_{1}=t^{\tiny{m_{1}(=0)}}(F_{1}+(-1)^{1}\frac{u_{\pi}^{1}}{1!t^{1}}F_{2}+(-1)^{2}\frac{u_{\pi}^{2}}{2!t^{2}}F_{3}+\cdots+(-1)^{d-1}\frac{u_{\pi}^{d-1}}{(d-1)!t^{d-1}}F_{d}) \\
\circ \ f_{2}=\mspace{135mu}t^{m_{2}}(F_{2}+(-1)^{1}\frac{u_{\pi}^{1}}{1!t^{1}}F_{3}+\cdots+(-1)^{d-2}\frac{u_{\pi}^{d-2}}{(d-2)!t^{d-2}}F_{d}) \\
\mspace{350mu}\cdots\\
\circ \ f_{d}=\mspace{458mu}t^{m_{d}}F_{d}
\end{cases}$$ 
where $\verb+{+F_{j}\verb+}+_{j}$ denotes a set of elements of $B_{\cris}^{+}\otimes_{\mathbb{Q}_{p}}V$ and we take $m_{j}\in\mathbb{Z}$ such that $\verb+{+f_{j}\verb+}+_{j}$ forms a basis of $B_{\cris}^{+}[u_{\pi}/t]\otimes_{\mathbb{Q}_{p}}V$ over $B_{\cris}^{+}[u_{\pi}/t]$.
\begin{defn}
With notations as above, let $\verb+{+h_{j}:=\overline{t^{m_{j}}F_{j}}\verb+}+_{j}$ denote the image of $\verb+{+t^{m_{j}}F_{j}\verb+}+_{j}$ by the homomorphism $B_{\cris}^{+}\otimes_{\mathbb{Q}_{p}}V\twoheadrightarrow \mathbb{C}_{p}\otimes_{\mathbb{Q}_{p}}V$. Then, define $D_{\pi\text{-}\Sen}(V)$ to be the $K_{\infty}^{\BK}$-vector space generated by $\verb+{+h_{j}\verb+}+_{j}$ contained in $(\mathbb{C}_{p}\otimes_{\mathbb{Q}_{p}}V)^{H}$.
\end{defn} 
\begin{lem}
The elements $\verb+{+h_{j}\verb+}+_{j}$ are linearly independent over $\mathbb{C}_{p}$ in $\mathbb{C}_{p}\otimes_{\mathbb{Q}_{p}}V$. In particular, $\verb+{+h_{j}\verb+}+_{j}$ forms a basis of $D_{\pi\text{-}\Sen}(V)$ over $K_{\infty}^{\BK}$ and its dimension over $K_{\infty}^{\BK}$ is equal to $\dim_{\mathbb{Q}_{p}}V$.
\end{lem}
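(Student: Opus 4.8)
The plan is to work over the ring $B_{\cris}^{+}[u_{\pi}/t]$, where the family in question is already a basis, and then descend to $\C_p$ along a ring homomorphism that carries $t^{m_j}F_j$ to $h_j$; the linear independence over $\C_p$ will then be a formal consequence of base change.

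First I would upgrade the known basis property. By construction $\{f_j\}_j$ is a basis of $B_{\cris}^{+}[u_{\pi}/t]\otimes_{\Q_p}V$ over $B_{\cris}^{+}[u_{\pi}/t]$, and the system $(*)$, read in terms of the generators $u_{\pi}/t$ and $t$ of $B_{\cris}^{+}[u_{\pi}/t]$, exhibits the vector $(f_j)_j$ as a unipotent upper triangular matrix over $B_{\cris}^{+}[u_{\pi}/t]$ applied to $(t^{m_j}F_j)_j$ --- here one uses that the $m_j$ are weakly decreasing, equivalently that each $t^{m_i}F_i$ lies in $B_{\cris}^{+}\otimes_{\Q_p}V$, as the Definition of the $h_i$ already presupposes. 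Since a unipotent matrix is invertible, $\{t^{m_j}F_j\}_j$ is itself a basis of $B_{\cris}^{+}[u_{\pi}/t]\otimes_{\Q_p}V$ over $B_{\cris}^{+}[u_{\pi}/t]$.

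Next I would produce a ring homomorphism $\rho\colon B_{\cris}^{+}[u_{\pi}/t]\to\C_p$ restricting to $\theta$ on $B_{\cris}^{+}$. The point is that $u_{\pi}=\log([s]\pi^{-1})$ lies in $\Fil^{1}B_{\dR}=tB_{\dR}^{+}$: from $\theta([s]\pi^{-1})=s^{(0)}/\pi=1$ we get $[s]\pi^{-1}\in 1+\Ker\theta$, hence $u_{\pi}\in\Ker\theta\cap B_{\dR}^{+}=\Fil^{1}B_{\dR}$; therefore $u_{\pi}/t\in B_{\dR}^{+}$, so $B_{\cris}^{+}[u_{\pi}/t]$ is a subring of $B_{\dR}^{+}$, and I take $\rho$ to be the restriction of the canonical projection $B_{\dR}^{+}\twoheadrightarrow B_{\dR}^{+}/tB_{\dR}^{+}=\C_p$, which on $B_{\cris}^{+}$ is $\theta$. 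Base change along $\rho$ then turns the basis $\{t^{m_j}F_j\}_j$ of $B_{\cris}^{+}[u_{\pi}/t]\otimes_{\Q_p}V$ into a $\C_p$-basis of $\C_p\otimes_{\Q_p}V$, and applying $\rho$ coordinatewise in $V$ sends $t^{m_j}F_j$ to $h_j$; hence $\{h_j\}_j$ is a $\C_p$-basis of $\C_p\otimes_{\Q_p}V$. In particular the $h_j$ are linearly independent over $\C_p$, a fortiori over $K_{\infty}^{\BK}$, so they form a basis of $D_{\pi\text{-}\Sen}(V)$ over $K_{\infty}^{\BK}$ and $\dim_{K_{\infty}^{\BK}}D_{\pi\text{-}\Sen}(V)=\dim_{\Q_p}V$.

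The only step I expect to require genuine care is the first one: one has to track the signs and the (non-positive) exponents $m_j$ in $(*)$ and verify that, after rewriting in terms of $u_{\pi}/t$ and $t$, the powers of $t$ surviving in the transition matrix are non-negative, so that this matrix and its inverse really do have entries in $B_{\cris}^{+}[u_{\pi}/t]$. This is exactly where the hypothesis that $\{f_j\}_j$ be a basis over $B_{\cris}^{+}[u_{\pi}/t]$, together with the precise shape of $(*)$, is used; the descent in the last step is purely formal, the one observation being that $B_{\cris}^{+}[u_{\pi}/t]$ --- in contrast to $B_{\st}$ or $B_{\dR}$ --- admits a natural surjection onto $\C_p$ extending $\theta$.
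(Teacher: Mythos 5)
Your argument is correct in outline but packages the proof differently from the paper. The paper's own proof is a one-line downward induction: one checks that $\{h_d\}$, then $\{h_{d-1},h_d\}$, \dots, then $\{h_1,\ldots,h_d\}$ are linearly independent, at each stage using that the image $\overline{f_j}$ of $f_j$ in $\mathbb{C}_p\otimes_{\mathbb{Q}_p}V$ differs from $h_j$ by a $\mathbb{C}_p$-combination of $h_{j+1},\ldots,h_d$ and that the $\overline{f_j}$ are independent. You instead prove the global statement upstairs --- that $\{t^{m_j}F_j\}_j$ is a $B_{\cris}^{+}[u_{\pi}/t]$-basis because the transition matrix to $\{f_j\}_j$ is unipotent upper triangular --- and then specialize along a ring homomorphism $\rho\colon B_{\cris}^{+}[u_{\pi}/t]\to\mathbb{C}_p$ extending $\theta$. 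The existence of $\rho$, via $u_{\pi}\in\Fil^{1}B_{\dR}$ and hence $B_{\cris}^{+}[u_{\pi}/t]\subset B_{\dR}^{+}$, is exactly right and is the (unstated) reason the construction uses $B_{\cris}^{+}[u_{\pi}/t]$ rather than $B_{\st}$; your version makes the mechanism transparent and yields for free that $\{h_j\}_j$ is a full $\mathbb{C}_p$-basis of $\mathbb{C}_p\otimes_{\mathbb{Q}_p}V$, which is what Proposition 3.3 then records.

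One justification is off, though. You claim that the weak monotonicity $m_1\geq m_2\geq\cdots\geq m_d$ --- needed so that the entries $(-1)^{k}\tfrac{(u_{\pi}/t)^{k}}{k!}\,t^{m_j-m_{j+k}}$ of the transition matrix, and of its inverse, lie in $B_{\cris}^{+}[u_{\pi}/t]$ --- is equivalent to $t^{m_i}F_i\in B_{\cris}^{+}\otimes_{\mathbb{Q}_p}V$. It is not: the latter holds, for instance, whenever all $m_i\geq 0$, with no constraint on their relative order. The monotonicity is really an extra normalization built into the choice of the $m_j$ in $(*)$ (it corresponds to the Hodge--Tate weights along the nilpotent chain increasing by at most $1$ at each step, in line with Proposition 3.8), and the paper's own induction tacitly uses the same fact at the same place, so you have not introduced a new gap; but it should be stated as a property of the construction of $(*)$ rather than deduced from the mere well-definedness of the $h_i$.
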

\begin{proof}
We can show inductively that $\verb+{+h_{d}\verb+}+$, $\verb+{+h_{d-1},h_{d}\verb+}+$, ..., $\verb+{+h_{1},\ldots,h_{d}\verb+}+$ are linearly independent over $\mathbb{C}_{p}$ in $\mathbb{C}_{p}\otimes_{\mathbb{Q}_{p}}V$.
\end{proof}
By this lemma, we can easily verify that the following proposition holds.
\begin{prop}(c.f. Subsection $2.2$)
We have $\mathbb{C}_{p}\otimes_{K_{\infty}^{\BK}}D_{\pi\text{-}\Sen}(V) =\mathbb{C}_{p}\otimes_{\mathbb{Q}_{p}}V$ and the natural map $(\mathbb{C}_{p})^{H}\otimes_{K_{\infty}^{\BK}}D_{\pi\text{-}\Sen}(V)\rightarrow (\mathbb{C}_{p}\otimes_{\mathbb{Q}_{p}}V)^{H}$ is an isomorphism.
\end{prop}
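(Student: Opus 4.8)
The plan is to deduce both assertions as essentially formal consequences of the preceding lemma, which already furnishes a family $\{h_{j}\}_{j=1}^{d}$ (where $d=\dim_{\mathbb{Q}_{p}}V$) that is simultaneously a $K_{\infty}^{\BK}$-basis of $D_{\pi\text{-}\Sen}(V)$ and a $\mathbb{C}_{p}$-linearly independent set in $\mathbb{C}_{p}\otimes_{\mathbb{Q}_{p}}V$. First I would note that, since $\mathbb{C}_{p}\otimes_{\mathbb{Q}_{p}}V$ has dimension $d$ over $\mathbb{C}_{p}$, the $\mathbb{C}_{p}$-linearly independent set $\{h_{j}\}$ is in fact a $\mathbb{C}_{p}$-basis of it. Comparing this with the $\mathbb{C}_{p}$-basis $\{1\otimes h_{j}\}$ of $\mathbb{C}_{p}\otimes_{K_{\infty}^{\BK}}D_{\pi\text{-}\Sen}(V)$ (which is a basis because $\{h_{j}\}$ is a $K_{\infty}^{\BK}$-basis of $D_{\pi\text{-}\Sen}(V)$), the natural scalar-extension map carries a basis to a basis and is therefore an isomorphism; this yields the first identity.

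For the second assertion I would run the standard Galois-descent argument familiar from Sen's theory (hence the ``c.f. Subsection $2.2$''). The left-hand side is a free $(\mathbb{C}_{p})^{H}$-module on $\{1\otimes h_{j}\}$, and since $D_{\pi\text{-}\Sen}(V)\subset (\mathbb{C}_{p}\otimes_{\mathbb{Q}_{p}}V)^{H}$ by construction, each $h_{j}$ lies in the target, so the natural map is well defined. Injectivity is immediate from the $\mathbb{C}_{p}$-linear independence of $\{h_{j}\}$. For surjectivity I would take $v\in (\mathbb{C}_{p}\otimes_{\mathbb{Q}_{p}}V)^{H}$, expand it uniquely as $v=\sum_{j}c_{j}h_{j}$ with $c_{j}\in\mathbb{C}_{p}$, and apply an arbitrary $\tau\in H$; since $\tau(h_{j})=h_{j}$, the uniqueness of the expansion forces $\tau(c_{j})=c_{j}$, i.e. $c_{j}\in(\mathbb{C}_{p})^{H}$, so $v$ lies in the image.

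The only genuine inputs beyond bookkeeping are the $\mathbb{C}_{p}$-linear independence of $\{h_{j}\}$ and the containment $D_{\pi\text{-}\Sen}(V)\subset(\mathbb{C}_{p}\otimes_{\mathbb{Q}_{p}}V)^{H}$, both of which are already in place (the lemma and the definition, respectively), so I do not anticipate a real obstacle here. The only point worth a word of care is that the uniqueness of the coordinate expansion used in the surjectivity step is legitimate precisely because $\{h_{j}\}$ is a $\mathbb{C}_{p}$-basis, and that this descent step is purely formal once a fixed $H$-invariant $\mathbb{C}_{p}$-basis is available — in particular it does not require separately identifying $(\mathbb{C}_{p})^{H}$ with the $p$-adic completion of $K_{\infty}^{\BK}$.
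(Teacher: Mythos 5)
Your argument is correct and matches the paper's intent: the paper offers no details, saying only that the proposition ``can easily be verified'' from Lemma 3.2, and your write-up supplies exactly the expected details — the $\mathbb{C}_{p}$-linear independence of the $H$-invariant basis $\{h_{j}\}$ gives the first identity by dimension count, and the formal descent argument (expand an $H$-invariant element in the basis $\{h_{j}\}$ and use uniqueness of coordinates) gives the second.
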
  
It follows easily that the $K_{\infty}^{\BK}$-vector space $D_{\pi\text{-}\Sen}(V)$ is equipped with the $K_{\infty}^{\BK}$-linear derivation $\nabla^{(0)}=\frac{\log(\gamma)}{\log(\chi(\gamma))}$ if $\gamma\in\Gamma_{K}$ is close enough to $1$. By the following proposition, we can see that the action of $\nabla^{(0)}$ on $D_{\pi\text{-}\Sen}(V)$ tells us about Hodge-Tate weights as in the case of $D_{\Sen}(V)$.
\begin{prop}(c.f. Proposition $2.1$) For a semi-stable representation $V$ of $G_{K}$, there exists a $\Gamma_{K}$-equivariant isomorphism of $K_{\infty}^{\BK}$-vector spaces
$$D_{\pi\text{-}\Sen}(V)\simeq \bigoplus_{j=1}^{d} K_{\infty}^{\BK}(n_{j}): h_{j}\mapsto t^{n_{j}} \quad (n_{j}\in\mathbb{Z}).$$
Furthermore, the set of integers $\verb+{+n_{j}\verb+}+_{j}$ is exactly the same as the set of Hodge-Tate weights of $V$.
\end{prop}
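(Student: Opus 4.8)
The plan is to adapt the proof of Proposition~2.1 to the field $K_{\infty}^{\BK}$, exploiting that a semi-stable representation is de Rham (since $B_{\st}\subset B_{\dR}$), hence Hodge-Tate; thus there is a $G_{K}$-equivariant isomorphism $\mathbb{C}_{p}\otimes_{\mathbb{Q}_{p}}V\simeq\bigoplus_{j=1}^{d}\mathbb{C}_{p}(n_{j})$, where $\{n_{j}\}_{j}$ is the multiset of Hodge-Tate weights of $V$ (equivalently, up to sign, the jumps of the Hodge filtration on $D_{\dR}(V)=K\otimes_{K_{0}}D_{\st}(V)$, counted with multiplicity). By Lemma~3.2 the $h_{j}$ are $\mathbb{C}_{p}$-linearly independent and number $d=\dim_{\mathbb{Q}_{p}}V$, so it is enough to prove that each $K_{\infty}^{\BK}$-line $K_{\infty}^{\BK}\cdot h_{j}$ is stable under $\Gamma_{K}$ and, as such, isomorphic to $K_{\infty}^{\BK}(n_{j})$ for a suitable integer $n_{j}$: the $\Gamma_{K}$-equivariant isomorphism $h_{j}\mapsto t^{n_{j}}$ then follows on taking the direct sum, and after extending scalars (Proposition~3.3) the multiset $\{n_{j}\}_{j}$ is identified with the set of Hodge-Tate weights, since a presentation of a $\mathbb{C}_{p}$-semilinear $G_{K}$-representation as a direct sum of Tate twists determines the multiset of twists (because $H^{0}(G_{K},\mathbb{C}_{p}(i))=0$ for $i\neq0$).

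The crux is to compute the Galois action on $h_{j}=\overline{t^{m_{j}}F_{j}}$. Conceptually, the expansion $(*)$ produces a basis of $B_{\cris}^{+}[u_{\pi}/t]\otimes_{\mathbb{Q}_{p}}V$ which, after reduction modulo $\Fil^{1}$, ought to be a basis of $\mathbb{C}_{p}\otimes_{\mathbb{Q}_{p}}V$ adapted to the Hodge-Tate decomposition, and the requirement that $\{f_{j}\}_{j}$ be a $B_{\cris}^{+}[u_{\pi}/t]$-basis forces the twisting exponents to be read off from the Hodge filtration on $D_{\dR}(V)=(B_{\dR}\otimes_{\mathbb{Q}_{p}}V)^{G_{K}}$. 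Since an element of $D_{\dR}(V)$ is $G_{K}$-fixed, twisting it by $t^{n_{j}}$ yields a vector on which $G_{K}$ acts through $\chi^{n_{j}}$, so that its image under $\theta$ spans a $G_{K}$-stable line isomorphic to $\mathbb{C}_{p}(n_{j})$ with $n_{j}$ the corresponding Hodge-Tate weight. The $u_{\pi}/t$-correction terms in $(*)$ --- present so that the elements stay inside $B_{\cris}^{+}[u_{\pi}/t]\otimes_{\mathbb{Q}_{p}}V$ and to record the monodromy structure --- do not interfere: the pertinent $1$-cocycle $g\mapsto g(u_{\pi})-u_{\pi}$ takes values in $\mathbb{Z}_{p}\cdot t\subset\Fil^{1}B_{\dR}^{+}$ and vanishes on the lift $\Gamma_{K}\simeq\Gal(K_{\infty}^{\BK}/K^{\BK})$ of $\Gamma_{K}$ determined by the chosen roots $\pi^{1/p^{m}}$, so that the $h_{j}$ are genuine $\chi^{n_{j}}$-eigenvectors for $\Gamma_{K}$ and their lines are $\Gamma_{K}$-stable. (On the normal subgroup $\Gamma_{\BK}$ this cocycle is non-zero, producing the unipotent action on $D_{\pi\text{-}\Sen}(V)$ that mirrors the monodromy operator $N$; it is irrelevant to the present statement.)

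I expect the main obstacle to lie in making the second paragraph rigorous from the explicit recipe of the Definition: one must check that the basis $\{f_{j}\}_{j}$ built in $(*)$ out of the $N$-Jordan basis of $D_{\st}(V)$ and the $u_{\pi}$-expansion does reduce, after the appropriate $t$-twist, to a Hodge-Tate-adapted basis of $\mathbb{C}_{p}\otimes_{\mathbb{Q}_{p}}V$, with the $m_{j}$ --- equivalently the $t$-adic orders of the $F_{j}$ --- dictated by the Hodge filtration on $D_{\dR}(V)=K\otimes_{K_{0}}D_{\st}(V)$. This calls for careful bookkeeping of $t$-adic orders and of the interplay among $B_{\st}$, the subring $B_{\cris}^{+}[u_{\pi}/t]\subset B_{\dR}^{+}$, the operator $N$, the cocycle $g\mapsto g(u_{\pi})-u_{\pi}$ and the de Rham filtration --- in particular handling the non-crystalline part of $V$, where the $F_{j}$ are not $G_{K}$-fixed on the nose and the $\Gamma_{K}$-eigenvector property survives only after reduction modulo $\Fil^{1}$.
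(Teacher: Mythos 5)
Your proposal follows essentially the same route as the paper: the paper's proof observes that $\gamma(f_{j})=\chi(\gamma)^{n_{j}}f_{j}$ by construction, deduces inductively (for $j=d,d-1,\dots,1$) that $\gamma(h_{j})=\chi(\gamma)^{n_{j}}h_{j}$, and then invokes Proposition 3.3 to identify the $n_{j}$ with the Hodge--Tate weights --- exactly the skeleton of your first two paragraphs, with your observation that the cocycle $g\mapsto g(u_{\pi})-u_{\pi}$ vanishes on the chosen lift of $\Gamma_{K}$ being precisely the (unstated) reason the paper's induction on the expansion $(*)$ goes through. The ``careful bookkeeping'' you defer in your last paragraph is no more than what the paper itself leaves implicit in the phrase ``we can show inductively,'' so there is no genuine gap relative to the paper's own level of detail.
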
 
\begin{proof}Note that we have $\verb+{+\gamma(f_{j})=\chi(\gamma)^{n_{j}}f_{j}\verb+}+_{j}$ by definition. Then, we can show inductively that we have $\verb+{+\gamma(h_{d})=\chi(\gamma)^{n_{d}}h_{d}\verb+}+$, $\verb+{+\gamma(h_{d-1})=\chi(\gamma)^{n_{d-1}}h_{d-1}\verb+}+,$ $\ldots$, $\verb+{+\gamma(h_{1})=\chi(\gamma)^{n_{1}}h_{1}\verb+}+$. The rest is easily verified by Proposition $3.3$. 
\end{proof}  
On the other hand, if $\beta\in\Gamma_{\BK}$ is close enough to $1$, the series of operators on $D_{\pi\text{-}\Sen}(V)$
$$\nabla^{(\pi)}=\frac{\log(\beta)}{c(\beta)}=-\frac{1}{c(\beta)}\sum_{k\geq 1}\frac{(1-\beta)^{k}}{k}$$
converges to a $K_{\infty}^{\BK}$-linear derivation on $D_{\pi\text{-}\Sen}(V)$ does not depend on the choice of $\beta\in \Gamma_{\BK}$. This easily follows from the calculations $\nabla^{(\pi)}(f_{j})=0$ and $\nabla^{(\pi)}(\frac{u_{\pi}}{t})=1$.
\begin{rem}
By using the calculations $\nabla^{(\pi)}(f_{j})=0$ and $\nabla^{(\pi)}(\frac{u_{\pi}}{t})=1$, we obtain $\nabla^{(\pi)}(F_{j})=F_{j+1}$ ($j<d$) and $\nabla^{(\pi)}(F_{d})=0$. Thus, we can rewrite $(*)$ as 
$$(*)' \quad f_{j}=t^{m_{j}}(\sum_{k=0}^{d-j}(-1)^{k}\frac{u_{\pi}^{k}}{k!t^{k}}(\nabla^{(\pi)})^{k}(F_{j})) \quad  (1\leq j\leq d).$$
Compare this formula to the main construction  $\verb+{+f_{j}^{(*)}\verb+}+_{j}$ in [M1]. In fact, the idea of the construction of $D_{\pi\text{-}\Sen}(V)$ is based on the similarity between Corollary 2.1.14 of [Ki] and Main Theorems of [M1] and [M2].
\end{rem}
\subsection{Some properties of differential operators} 
We shall describe the actions of derivations $\nabla^{(0)}$ and $\nabla^{(\pi)}$ on $D_{\pi\text{-}\Sen}(V)$. First, by a standard argument, we can show that, if $x \in D_{\pi\text{-}\Sen}(V)$, we have
$$
\nabla^{(0)}(x)=\lim_{\gamma\rightarrow 1}\frac{\gamma(x)-x}{\chi(\gamma)-1}\quad \text{and} \quad \nabla^{(\pi)}(x)=\lim_{\beta\rightarrow 1}\frac{\beta(x)-x}{c(\beta)}.
$$
By using these presentations, we compute the bracket  $[\ , \ ]$ of derivations $\nabla^{(0)}$ and $\nabla^{(\pi)}$ on $D_{\pi\text{-}\Sen}(V)$.
\begin{prop} On the differential module $D_{\pi\text{-}\Sen}(V)$, we
have $[\nabla^{(0)},\nabla^{(\pi)}]= \nabla^{(\pi)}$.
\end{prop}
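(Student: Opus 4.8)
The plan is to read off the bracket from the semidirect product relation in $G_{K}/H\simeq\Gamma_{K}\ltimes\Gamma_{\BK}$, transported to the derivations $\nabla^{(0)},\nabla^{(\pi)}$ through the two limit formulae recalled just above the statement.

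First I would record the commutation rule inside the group. For $\gamma\in\Gamma_{K}$ and $\beta\in\Gamma_{\BK}$, the conjugate $\gamma\beta\gamma^{-1}$ again lies in the normal subgroup $\Gamma_{\BK}$, and applying it to $\pi^{1/p^{m}}$ while using $\gamma(\zeta_{p^{m}})=\zeta_{p^{m}}^{\chi(\gamma)}$, $\beta(\zeta_{p^{m}})=\zeta_{p^{m}}$ and $\beta(\pi^{1/p^{m}})=\pi^{1/p^{m}}\zeta_{p^{m}}^{c(\beta)}$ gives $(\gamma\beta\gamma^{-1})(\pi^{1/p^{m}})=\pi^{1/p^{m}}\zeta_{p^{m}}^{\chi(\gamma)c(\beta)}$, that is, $c(\gamma\beta\gamma^{-1})=\chi(\gamma)\,c(\beta)$. (This merely spells out the $\ltimes$ in the displayed isomorphism, read through $c:\Gamma_{\BK}\simeq\mathbb{Z}_{p}$.)

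Next I would convert this into an identity of operators on $D_{\pi\text{-}\Sen}(V)$, which is finite dimensional over $K_{\infty}^{\BK}$ and stable under both $\Gamma_{K}$ and $\Gamma_{\BK}$ (this is exactly what makes $\nabla^{(0)}$ and $\nabla^{(\pi)}$ well defined); in particular every $\gamma$ and $\nabla^{(\pi)}$ act as continuous $K_{\infty}^{\BK}$-linear operators and the limit below converges as in the standard argument that established the displayed formulae. Starting from $\nabla^{(\pi)}(y)=\lim_{\beta\rightarrow1}\frac{\beta(y)-y}{c(\beta)}$, using linearity and continuity of $\gamma$, and substituting $\beta'=\gamma\beta\gamma^{-1}$ (so $c(\beta)=\chi(\gamma)^{-1}c(\beta')$, $\beta'\rightarrow1$ as $\beta\rightarrow1$, and $\beta\mapsto\beta'$ is a bijection of $\Gamma_{\BK}$), I obtain
$$\gamma\circ\nabla^{(\pi)}\circ\gamma^{-1}=\chi(\gamma)\,\nabla^{(\pi)}\quad\text{on }D_{\pi\text{-}\Sen}(V)\ \text{for every }\gamma\in\Gamma_{K},$$
equivalently $\gamma\nabla^{(\pi)}-\nabla^{(\pi)}\gamma=(\chi(\gamma)-1)\,\nabla^{(\pi)}\gamma$.

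Finally I would apply $\nabla^{(0)}(x)=\lim_{\gamma\rightarrow1}\frac{\gamma(x)-x}{\chi(\gamma)-1}$, pulling the continuous linear operator $\nabla^{(\pi)}$ through this limit, to get, for $x\in D_{\pi\text{-}\Sen}(V)$,
$$[\nabla^{(0)},\nabla^{(\pi)}](x)=\lim_{\gamma\rightarrow1}\frac{\gamma\nabla^{(\pi)}(x)-\nabla^{(\pi)}\gamma(x)}{\chi(\gamma)-1}=\lim_{\gamma\rightarrow1}\nabla^{(\pi)}(\gamma(x))=\nabla^{(\pi)}(x),$$
the last equality since $\gamma(x)\rightarrow x$ as $\gamma\rightarrow1$. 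The one genuinely delicate point is the third paragraph: obtaining the conjugation relation with the exact constant $\chi(\gamma)$ and justifying the interchange of the $\Gamma_{K}$-action with the limit defining $\nabla^{(\pi)}$; the bracket computation itself is then immediate. Alternatively, one may write $\gamma=\exp(\log\chi(\gamma)\,\nabla^{(0)})$ and $\beta=\exp(c(\beta)\,\nabla^{(\pi)})$ as operators for $\gamma,\beta$ near $1$, rewrite the group relation as $\mathrm{Ad}\!\left(\exp(s\nabla^{(0)})\right)(\nabla^{(\pi)})=e^{s}\nabla^{(\pi)}$ with $s=\log\chi(\gamma)$, and differentiate at $s=0$; the limit version above avoids the convergence bookkeeping for the exponential series.
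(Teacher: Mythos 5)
Your proof is correct and rests on the same two ingredients as the paper's: the semidirect-product relation $\gamma\beta=\beta^{\chi(\gamma)}\gamma$ (which is exactly your $c(\gamma\beta\gamma^{-1})=\chi(\gamma)c(\beta)$) together with the limit presentations of $\nabla^{(0)}$ and $\nabla^{(\pi)}$. The only difference is organizational: you first isolate the conjugation identity $\gamma\circ\nabla^{(\pi)}\circ\gamma^{-1}=\chi(\gamma)\nabla^{(\pi)}$ and then take a single limit in $\gamma$, whereas the paper runs a double limit in $\beta$ and $\gamma$ and invokes $\lim_{h\rightarrow 0}\frac{a^{h+1}-a}{h}=a\log(a)$; both routes are valid and essentially identical.
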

\begin{proof} First, note that we have the relation 
$\gamma\beta =\beta^{\chi(\gamma)}\gamma$. Then, since we have
$$\lim _{h\rightarrow 0}
\frac{a^{h+1} - a}{(h + 1) - 1}=a \log(a),$$ 
we obtain
{\small \begin{align*}
[\nabla^{(0)},\nabla^{(\pi)}](*) =& \lim_{\gamma\rightarrow 1}\frac{\gamma-1}{\chi(\gamma)-1}\lim_{\beta\rightarrow 1}\frac{\beta-1}{c(\beta)}(*)-\lim_{\beta\rightarrow 1}\frac{\beta-1}{c(\beta)}\lim_{\gamma\rightarrow 1}\frac{\gamma-1}{\chi(\gamma)-1}(*)\\
=&\lim_{\beta\rightarrow 1}\lim_{\gamma\rightarrow 1}\frac{\gamma\beta-\gamma-\beta+1}{(\chi(\gamma)-1)c(\beta)}(*)-\lim_{\beta\rightarrow 1}\lim_{\gamma\rightarrow 1}\frac{\beta\gamma-\gamma-\beta+1}{(\chi(\gamma)-1)c(\beta)}(*)\\
=&\lim_{\beta\rightarrow 1}\lim_{\gamma\rightarrow 1}\frac{\beta^{\chi(\gamma)}\gamma-\beta\gamma}{(\chi(\gamma)-1)c(\beta)}(*)\\
=&\lim_{\beta\rightarrow 1}\frac{\beta\log(\beta)}{c(\beta)}(*)\\
=&\nabla^{(\pi)}(*).
\end{align*}}
\end{proof}
\begin{prop} The action of the $K_{\infty}^{\BK}$-linear derivation $\nabla^{(\pi)}$ on
$D_{\pi\text{-}\Sen}(V )$ is nilpotent.
\end{prop}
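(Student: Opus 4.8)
The plan is to derive the nilpotence of $\nabla^{(\pi)}$ from the commutator relation $[\nabla^{(0)},\nabla^{(\pi)}]=\nabla^{(\pi)}$ of the preceding proposition, combined with the description of $D_{\pi\text{-}\Sen}(V)$ in terms of Hodge--Tate weights (Proposition $3.4$). The point is that $[\nabla^{(0)},\nabla^{(\pi)}]=\nabla^{(\pi)}$ forces $\nabla^{(\pi)}$ to be an eigenvalue-raising operator for $\nabla^{(0)}$, and $\nabla^{(0)}$ has only finitely many eigenvalues, all integers.

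First I would record the structure of $\nabla^{(0)}$ on $D_{\pi\text{-}\Sen}(V)$. By Proposition $3.4$ the basis $\{h_j\}_j$ satisfies $\gamma(h_j)=\chi(\gamma)^{n_j}h_j$ for $\gamma\in\Gamma_K$ with $n_j\in\mathbb{Z}$; substituting this into $\nabla^{(0)}(x)=\lim_{\gamma\to 1}\frac{\gamma(x)-x}{\chi(\gamma)-1}$ yields $\nabla^{(0)}(h_j)=n_j h_j$. Hence $\nabla^{(0)}$ is a semisimple $K_{\infty}^{\BK}$-linear endomorphism of the finite-dimensional $K_{\infty}^{\BK}$-vector space $D_{\pi\text{-}\Sen}(V)$ whose eigenvalues lie in the finite set $\{n_j\}_j\subset\mathbb{Z}$, and I may write $D_{\pi\text{-}\Sen}(V)=\bigoplus_{n\in\mathbb{Z}}D^{(n)}$ for the associated eigenspace decomposition, with $D^{(n)}\neq 0$ for only finitely many $n$.

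Next I would feed in the commutator relation. For $x\in D^{(n)}$ the identity $\nabla^{(0)}\nabla^{(\pi)}=\nabla^{(\pi)}\nabla^{(0)}+\nabla^{(\pi)}$ gives
\[
\nabla^{(0)}\bigl(\nabla^{(\pi)}(x)\bigr)=\nabla^{(\pi)}\bigl(\nabla^{(0)}(x)\bigr)+\nabla^{(\pi)}(x)=(n+1)\,\nabla^{(\pi)}(x),
\]
so $\nabla^{(\pi)}\bigl(D^{(n)}\bigr)\subseteq D^{(n+1)}$. Setting $N=\max\{n:D^{(n)}\neq 0\}$ and $M=\min\{n:D^{(n)}\neq 0\}$, for any $x\in D^{(n)}$ the element $(\nabla^{(\pi)})^{\,N-M+1}(x)$ lies in $D^{(n+N-M+1)}$, which vanishes since $n+N-M+1\geq N+1$. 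As the $D^{(n)}$ span $D_{\pi\text{-}\Sen}(V)$, this shows $(\nabla^{(\pi)})^{\,N-M+1}=0$, i.e.\ $\nabla^{(\pi)}$ is nilpotent.

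I do not expect a genuine obstacle here: the inputs are the two propositions just proved and the elementary fact, already used for $D_{\Sen}(V)$, that $\nabla^{(0)}$ acts by the scalar $n$ on a $\chi^{n}$-isotypic line. A more hands-on variant avoids the commutator relation altogether: since the map $B_{\cris}^{+}\otimes_{\mathbb{Q}_{p}}V\twoheadrightarrow\mathbb{C}_{p}\otimes_{\mathbb{Q}_{p}}V$ is $G_K$-equivariant and $\beta(t)=t$ for $\beta\in\Gamma_{\BK}$, one obtains $\nabla^{(\pi)}(h_j)=\overline{\nabla^{(\pi)}(t^{m_j}F_j)}=\overline{t^{m_j}F_{j+1}}$ from the relation $\nabla^{(\pi)}(F_j)=F_{j+1}$ of the Remark (with $F_{d+1}=0$); comparing $m_j$ with $m_{j+1}$ one checks that $\overline{t^{m_j}F_{j+1}}\in K_{\infty}^{\BK}h_{j+1}$, so that the matrix of $\nabla^{(\pi)}$ in the basis $\{h_j\}_j$ is strictly upper triangular and $(\nabla^{(\pi)})^{d}=0$. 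The only mildly delicate step in this second approach is that last containment.
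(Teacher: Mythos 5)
Your main argument is correct, but it is not the paper's: the paper proves this by a trace computation, iterating $[\nabla^{(0)},\nabla^{(\pi)}]=\nabla^{(\pi)}$ to get $\nabla^{(0)}(\nabla^{(\pi)})^{r}-(\nabla^{(\pi)})^{r}\nabla^{(0)}=r(\nabla^{(\pi)})^{r}$, hence $\tr((\nabla^{(\pi)})^{r})=0$ for all $r\geq 1$, which in characteristic $0$ forces nilpotence. You instead combine the commutator relation with the semisimplicity of $\nabla^{(0)}$ furnished by Proposition~3.4 (integer eigenvalues $n_j$, finitely many of them) to show $\nabla^{(\pi)}$ maps the $n$-eigenspace into the $(n+1)$-eigenspace and must therefore vanish after $N-M+1$ steps. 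Both routes are sound; the trade-off is that the paper's trace argument uses only the bracket relation and finite-dimensionality (so it would apply even without knowing $\nabla^{(0)}$ is diagonalizable), whereas your route additionally invokes the Hodge--Tate structure but in exchange proves more: your eigenvalue-shifting step is exactly the paper's Proposition~3.8, and your explicit bound $N-M+1$ on the nilpotency index is essentially the refined statement of Proposition~4.2 (the local monodromy theorem), so you are in effect proving Propositions~3.7, 3.8 and 4.2 in one pass, in the reverse logical order from the paper. Your second, ``hands-on'' variant is the one I would not lean on: the containment $\overline{t^{m_j}F_{j+1}}\in K_{\infty}^{\BK}h_{j+1}$ is genuinely problematic when $m_j\neq m_{j+1}$, since $\theta(t)=0$ means that reducing $t^{m_j}F_{j+1}$ modulo $\Ker(\theta)$ is not simply rescaling $h_{j+1}=\overline{t^{m_{j+1}}F_{j+1}}$ by an element of $K_{\infty}^{\BK}$ (if $m_j>m_{j+1}$ the image vanishes outright, and if $m_j<m_{j+1}$ the element $t^{m_j}F_{j+1}$ need not even lie in $B_{\cris}^{+}\otimes_{\mathbb{Q}_p}V$), so you are right to flag it as the delicate step; stick with the eigenspace argument.
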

\begin{proof} From the equality $\nabla^{(0)}\nabla^{(\pi)}-\nabla^{(\pi)}\nabla^{(0)} = \nabla^{(\pi)}$, we get $\nabla^{(0)}(\nabla^{(\pi)})^{r}-(\nabla^{(\pi)})^{r}\nabla^{(0)}$  $= r(\nabla^{(\pi)})^{r}$ and $\tr(r(\nabla^{(\pi)})^{r}) = 0$ for all $r\in\mathbb{N}$. Since the characteristic of $K_{\infty}^{\BK}$ is $0$,
we obtain $\tr((\nabla^{(\pi)})^{r}) = 0$ for all $r\in\mathbb{N}$. As is well known in linear algebra, this
shows that the action of the $K_{ \infty}^{\BK}$-linear derivation $\nabla^{(\pi)}$ on
$D_{\pi\text{-}\Sen}(V )$ is nilpotent.
\end{proof}
\begin{prop}
For an element $x\in D_{\pi\text{-}\Sen}(V)$ such that $\nabla^{(0)}(x)=nx$ ($n\in\mathbb{Z}$), we have $\nabla^{(0)}(\nabla^{(\pi)}(x))=(n+1)\nabla^{(\pi)}(x)$, that is, the action of $\nabla^{(\pi)}$ increases the Hodge-Tate weight by $1$. 
\end{prop}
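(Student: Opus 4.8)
The plan is to deduce the statement immediately from the commutator relation $[\nabla^{(0)},\nabla^{(\pi)}]=\nabla^{(\pi)}$ established in Proposition 3.8. Writing that operator identity as $\nabla^{(0)}\nabla^{(\pi)}-\nabla^{(\pi)}\nabla^{(0)}=\nabla^{(\pi)}$ and evaluating both sides at $x$ gives
$$\nabla^{(0)}(\nabla^{(\pi)}(x))-\nabla^{(\pi)}(\nabla^{(0)}(x))=\nabla^{(\pi)}(x).$$
Here one uses that $\nabla^{(\pi)}$ preserves $D_{\pi\text{-}\Sen}(V)$, so that $\nabla^{(\pi)}(x)$ again lies in the space on which $\nabla^{(0)}$ acts and both sides make sense.

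Next I would substitute the hypothesis $\nabla^{(0)}(x)=nx$. Since $n\in\mathbb{Z}\subset K_{\infty}^{\BK}$ and $\nabla^{(\pi)}$ is $K_{\infty}^{\BK}$-linear, the scalar passes through: $\nabla^{(\pi)}(nx)=n\nabla^{(\pi)}(x)$. The displayed identity then becomes $\nabla^{(0)}(\nabla^{(\pi)}(x))-n\nabla^{(\pi)}(x)=\nabla^{(\pi)}(x)$, i.e. $\nabla^{(0)}(\nabla^{(\pi)}(x))=(n+1)\nabla^{(\pi)}(x)$, which is the asserted eigenvalue shift. The phrasing ``increases the Hodge--Tate weight by $1$'' is then just the translation of this through Proposition 3.5, which identifies the integer eigenvalues of $\nabla^{(0)}$ on $D_{\pi\text{-}\Sen}(V)$ with the Hodge--Tate weights of $V$.

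There is essentially no obstacle: the mathematical content is entirely contained in Proposition 3.8, and the present proposition is merely its infinitesimal ``weight-shift'' reformulation. The only points deserving a line of comment are the two used above — that $\nabla^{(\pi)}$ stabilizes $D_{\pi\text{-}\Sen}(V)$ and that a rational integer is a scalar for the $K_{\infty}^{\BK}$-linear operator $\nabla^{(\pi)}$. If one preferred not to cite Proposition 3.8, one could instead run the same computation directly from the limit presentations $\nabla^{(0)}(x)=\lim_{\gamma\to 1}(\gamma(x)-x)/(\chi(\gamma)-1)$ and $\nabla^{(\pi)}(x)=\lim_{\beta\to 1}(\beta(x)-x)/c(\beta)$ together with the braiding relation $\gamma\beta=\beta^{\chi(\gamma)}\gamma$; but that simply re-proves the bracket identity, so invoking Proposition 3.8 is the cleaner route.
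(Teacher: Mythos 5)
Your proposal is correct and is exactly the paper's argument: the paper's proof is the one-line remark that the statement ``follows easily from the relation $[\nabla^{(0)},\nabla^{(\pi)}]=\nabla^{(\pi)}$,'' and you have simply written out that evaluation at $x$ together with the $K_{\infty}^{\BK}$-linearity of $\nabla^{(\pi)}$. The only blemish is a labelling slip: in the paper the commutator identity is Proposition~3.6 (not 3.8, which is the statement being proved) and the eigenvalue/Hodge--Tate-weight identification is Proposition~3.4.
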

\begin{proof} This follows easily from the relation $[\nabla^{(0)},\nabla^{(\pi)}]=\nabla^{(\pi)}$.
\end{proof}
There are many choices of a $K_{\infty}^{\BK}$-subspace of dimension $\dim_{\mathbb{Q}_{p}}V$ in $(\mathbb{C}_{p}\otimes_{\mathbb{Q}_{p}}V)^{H}$ equipped with derivations $\nabla^{(0)}$ and $\nabla^{(\pi)}$. The aim of this article is, however, to construct a differential module in $\mathbb{C}_{p}\otimes_{\mathbb{Q}_{p}}V$ which is closely related to the module $D_{\st}(V)$. Thus, the following proposition says that the choice $D_{\pi\text{-}\Sen}(V)$ may be a reasonable one. 
\begin{prop}For a crystalline representation $V$ of $G_{K}$, the action of $\nabla^{(\pi)}$ on $D_{\pi\text{-}\Sen}(V)$ is trivial.
\end{prop}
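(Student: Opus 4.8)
The plan is to use the fact that when $V$ is crystalline, the monodromy operator $N$ on $D_{\st}(V)$ vanishes, and to trace through what this forces on the construction of $D_{\pi\text{-}\Sen}(V)$. First I would recall that $V$ crystalline means $D_{\cris}(V) = D_{\st}(V)$ and that $N$ acts as $0$; equivalently, the basis $\{g_j\}_j$ of $D_{\st}(V)$ over $K_0$ with $g_1 \overset{N}{\to} g_2 \overset{N}{\to} \cdots \overset{N}{\to} g_d \overset{N}{\to} 0$ collapses, so in the crystalline case we simply have $d=1$ for each nilpotent block (each block has size one) and $N g_j = 0$ for all $j$. Consequently, when we twist the $\{g_j\}_j$ by powers of $t$ to get the basis $\{f_j\}_j$ of $B_{\cris}^+[u_\pi/t] \otimes_{\Q_p} V$, there is no need to introduce any $u_\pi$-terms: in the presentation $(*)$ every row degenerates to $f_j = t^{m_j} F_j$, with each $F_j \in B_{\cris}^+ \otimes_{\Q_p} V$.

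Next I would invoke Remark $3.6$, specifically the identity $(*)'$: $f_j = t^{m_j}\bigl(\sum_{k=0}^{d-j} (-1)^k \frac{u_\pi^k}{k! t^k} (\nabla^{(\pi)})^k(F_j)\bigr)$ together with the relations $\nabla^{(\pi)}(F_j) = F_{j+1}$ and $\nabla^{(\pi)}(F_d) = 0$. In the crystalline case, since the Frobenius-and-$N$ structure on $D_{\cris}(V)$ has $N=0$, one checks that the elements $F_j$ can be taken already inside $B_{\cris} \otimes_{\Q_p} V$ in such a way that $\nabla^{(\pi)}(F_j) = 0$ for every $j$ — indeed $\nabla^{(\pi)}$ is by construction the derivation killing $B_{\cris}$ and sending $u_\pi/t \mapsto 1$, and the $F_j$'s, being built from a $K_0$-basis of $D_{\cris}(V)$ via multiplication by powers of $t$, lie in $B_{\cris} \otimes_{\Q_p} V$, on which $\nabla^{(\pi)}$ vanishes. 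Then $h_j = \overline{t^{m_j} F_j}$ is the image of an element of $B_{\cris} \otimes_{\Q_p} V$ under the projection to $\C_p \otimes_{\Q_p} V$, and since $\nabla^{(\pi)}$ is computed as $\lim_{\beta \to 1} \frac{\beta(x) - x}{c(\beta)}$ and $\beta \in \Gamma_{\BK}$ acts trivially on $B_{\cris}$ (the $\Gamma_{\BK}$-action is nontrivial only through $u_\pi$, which does not appear), each $h_j$ is fixed by $\Gamma_{\BK}$, hence $\nabla^{(\pi)}(h_j) = 0$. Since $\{h_j\}_j$ is a $K_\infty^{\BK}$-basis of $D_{\pi\text{-}\Sen}(V)$ by Lemma $3.2$ and $\nabla^{(\pi)}$ is $K_\infty^{\BK}$-linear, $\nabla^{(\pi)}$ is trivial on $D_{\pi\text{-}\Sen}(V)$.

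The step I expect to be the main obstacle is justifying cleanly that for crystalline $V$ one may arrange the $F_j$ to lie in $B_{\cris} \otimes_{\Q_p} V$ with $\nabla^{(\pi)}(F_j)=0$ — i.e. that the $u_\pi$-dependence genuinely drops out rather than merely being absorbable into a change of basis. The cleanest route is probably to argue contrapositively at the level of $D_{\st}$: $\nabla^{(\pi)}(F_d) = 0$ always, and $\nabla^{(\pi)}(F_j) = F_{j+1}$; if some $F_{j+1} \neq 0$ in $\C_p \otimes_{\Q_p} V$ this reflects a nonzero image of $N$ acting on the corresponding block of $D_{\st}(V)$, contradicting $N = 0$ for crystalline $V$. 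Thus all $F_{j+1}$ vanish except the "top" of each length-one block, the sums in $(*)'$ truncate to a single term $f_j = t^{m_j} F_j$, and the conclusion $\nabla^{(\pi)}|_{D_{\pi\text{-}\Sen}(V)} = 0$ follows as above.
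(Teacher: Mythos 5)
Your proposal is correct and follows essentially the same route as the paper: since $N=0$ for crystalline $V$, the presentation $(*)$ degenerates to $f_j=t^{m_j}F_j$ with no $u_\pi$-terms, so $h_j=\overline{f_j}$ comes from $B_{\cris}\otimes_{\mathbb{Q}_p}V$, is fixed by $\Gamma_{\BK}$ (as $g_j\in D_{\cris}(V)$ is $G_K$-fixed and $\chi$ is trivial on $\Gamma_{\BK}$), and hence is killed by $\nabla^{(\pi)}$. The only minor looseness is the phrase ``$\Gamma_{\BK}$ acts trivially on $B_{\cris}$,'' which should be stated as triviality on the specific elements $t^{m_j}g_j$, but this does not affect the argument.
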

\begin{proof}In the case when $V$ is a crystalline representation of $G_{K}$, we can take $\verb+{+\overline{f_{j}}\verb+}+_{j}$ as a basis of $D_{\pi\text{-}\Sen}(V)$ over $K_{\infty}^{\BK}$.
We can see that the action of $\Gamma_{\BK}$ on this basis is trivial and thus the action of $\nabla^{(\pi)}$ on $D_{\pi\text{-}\Sen}(V)$ is trivial.
\end{proof}
Conversely, there is a semi-stable representation $V$ of $G_{K}$ such that the action of $\nabla^{(\pi)}$ on $D_{\pi\text{-}\Sen}(V)$ is non-trivial. The next example is the prototype of such a semi-stable representation.
\begin{ex}
Let $V$ be a $p$-adic representation of $G_{K}$ attached to the Tate curve $\overline{K}^{*}/\langle\pi\rangle$. We can take a basis $\verb+{+e,f\verb+}+$ of $V$ over $\mathbb{Q}_{p}$ such that the action of $g\in G_{K}$  is given by
\begin{equation*}
\begin{pmatrix}
\chi(g) &c(g)\\
0       &1
\end{pmatrix}.
\end{equation*}
It is easy to see that $\verb+{+h_{1}=1\otimes f,h_{2}=1\otimes e\verb+}+ \ (\subset\mathbb{C}_{p}\otimes_{\mathbb{Q}_{p}}V)$ forms a basis of $D_{\pi\text{-}\Sen}(V)$ over $K_{\infty}^{\BK}$. As indicated by Proposition $3.4$, we have $$\nabla^{(0)}(h_{1})=0\quad \text{and}  \quad \nabla^{(0)}(h_{2})=h_{2},$$ that is, the Hodge-Tate weights of $V$ are $\verb+{+0, 1\verb+}+$. Furthermore, the action of $\nabla^{(\pi)}$ on this basis is given by 
$$h_{1}\overset{\nabla^{(\pi)}}{\longrightarrow}h_{2}\overset{\nabla^{(\pi)}}{\longrightarrow}0.$$
This means that the action of $\nabla^{(\pi)}$ on $D_{\pi\text{-}\Sen}(V)$ is nilpotent (Proposition 3.7) and that the action of $\nabla^{(\pi)}$ increases the Hodge-Tate weights by $1$ (Proposition 3.8). Thus, we can know more than Hodge-Tate weights by using the $K_{\infty}^{\BK}$-vector space $D_{\pi\text{-}\Sen}(V)$ equipped with $\nabla^{(\pi)}$. 
\end{ex} 
\section{Geometric aspect of $D_{\pi\text{-}\Sen}(V)$}
Let $X$ be a proper smooth scheme over $K$. Then, it is known that the $p$-adic \'etale cohomology $V^{m}=H^{m}_{\text{\'et}}(X\otimes_{K}\overline{K},\mathbb{Q}_{p})$ is a de Rham representation of $G_{K}$.
Furthermore, due to the result of Berger, we can see that $V^{m}$ is a potentially semi-stable representation of $G_{K}$.
Let $L/K$ be a finite field extension of $K$ in $\overline{K}$ such that $V^{m}$ is a semi-stable representation of $G_{L}$ and let $V_{L}^{m}$ denote the restriction of $V^{m}$ to $G_{L}$. 

In this section, we shall study the geometric aspect of $D_{\pi\text{-}\Sen}(V_{L}^{m})$ and see that the action of $\nabla^{(\pi)}$ describes an analogy of the infinitesimal variations of Hodge structures and satisfies  formulae similar to the Griffiths transversality and the local monodromy theorem.  
First, by Proposition 3.4, we obtain the $\Gamma_K$-equvariant isomorphism of $L_{\infty}^{\BK}$-vector spaces 
$D_{\pi\text{-}\Sen}(V_{L}^{m})\simeq \bigoplus_{j=1}^{\dim_{\mathbb{Q}_{p}}V}L_{\infty}^{\BK}(n_{j}).$
With this presentation, define the subspace $D^{s,t}_{\pi\text{-}\Sen}(V_{L}^{m})$ of $D_{\pi\text{-}\Sen}(V_{L}^{m})$ to be $D^{m-t,t}_{\pi\text{-}\Sen}(V_{L}^{m})=\verb+{+x\in D_{\pi\text{-}\Sen}(V_{L}^{m})\verb+|+\nabla^{(0)}(x)=tx\verb+}+$ ($t\in\mathbb{Z}$). It follows easily that we obtain the decomposition
$$D_{\pi\text{-}\Sen}(V_{L}^{m})=\bigoplus_{s+t=m}D^{s,t}_{\pi\text{-}\Sen}(V_{L}^{m}).$$
The next proposition claims that the action of $\nabla^{(\pi)}$ on $D_{\pi\text{-}\Sen}(V_{L}^{m})$ satisfies a formula similar to Griffiths transversality.
\begin{prop}(Transversality) With notations as above, we have
$$\nabla^{(\pi)}(D^{s,t}_{\pi\text{-}\Sen}(V_{L}^{m}))\subset D^{s-1,t+1}_{\pi\text{-}\Sen}(V_{L}^{m}).$$
\end{prop}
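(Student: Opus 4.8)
The plan is to deduce this directly from Proposition 3.8, so that the transversality statement becomes little more than a reformulation of the commutation relation $[\nabla^{(0)},\nabla^{(\pi)}]=\nabla^{(\pi)}$ in terms of the eigenspace decomposition of $\nabla^{(0)}$. First I would recall the relevant structure: by Proposition 3.4 the derivation $\nabla^{(0)}$ acts semisimply on $D_{\pi\text{-}\Sen}(V_{L}^{m})$ with integer eigenvalues, so $D_{\pi\text{-}\Sen}(V_{L}^{m})=\bigoplus_{s+t=m}D^{s,t}_{\pi\text{-}\Sen}(V_{L}^{m})$ where $D^{s,t}_{\pi\text{-}\Sen}(V_{L}^{m})=D^{m-t,t}_{\pi\text{-}\Sen}(V_{L}^{m})$ is by definition the $\nabla^{(0)}$-eigenspace for the eigenvalue $t$. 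I would also note that $\nabla^{(\pi)}$ is an $L_{\infty}^{\BK}$-linear derivation on $D_{\pi\text{-}\Sen}(V_{L}^{m})$, hence in particular maps $D_{\pi\text{-}\Sen}(V_{L}^{m})$ into itself.

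Then, given $x\in D^{s,t}_{\pi\text{-}\Sen}(V_{L}^{m})$, so that $\nabla^{(0)}(x)=tx$, I would apply Proposition 3.8 with $n=t$ to conclude that $\nabla^{(0)}(\nabla^{(\pi)}(x))=(t+1)\,\nabla^{(\pi)}(x)$. Since $\nabla^{(\pi)}(x)$ again lies in $D_{\pi\text{-}\Sen}(V_{L}^{m})$, this says exactly that $\nabla^{(\pi)}(x)$ belongs to the $\nabla^{(0)}$-eigenspace for the eigenvalue $t+1$, i.e.\ to $D^{m-(t+1),\,t+1}_{\pi\text{-}\Sen}(V_{L}^{m})$. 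Finally, the identity $m-(t+1)=(m-t)-1=s-1$ identifies this eigenspace with $D^{s-1,t+1}_{\pi\text{-}\Sen}(V_{L}^{m})$, which gives the asserted inclusion.

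There is no genuine obstacle here; the substantive input—that $\nabla^{(\pi)}$ raises the $\nabla^{(0)}$-weight (the Hodge-Tate weight) by $1$—is already Proposition 3.8, itself a formal consequence of Proposition 3.6. The only point requiring care is the bookkeeping of the bidegree convention $D^{s,t}=D^{m-t,t}$: one must keep the cohomological degree $m$ fixed while the $\nabla^{(0)}$-weight shifts from $t$ to $t+1$, so that $s$ drops by $1$ exactly as $t$ rises by $1$. Once this is spelled out, the proof is immediate.
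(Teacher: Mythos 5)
Your proposal is correct and follows exactly the paper's route: the paper's proof is the one-line remark that the statement ``follows easily from Proposition 3.8,'' and you have simply spelled out the eigenspace bookkeeping ($\nabla^{(0)}$-weight $t\mapsto t+1$, hence $s\mapsto s-1$ with $s+t=m$ fixed) that makes this immediate. Nothing further is needed.
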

\begin{proof}
This follows easily from Proposition 3.8.
\end{proof}
By the same argument, we can see that an analogy of the local monodromy theorem holds for the $L_{\infty}^{\BK}$-vector space $D_{\pi\text{-}\Sen}(V_{L}^{m})$ equipped with $\nabla^{(\pi)}$.
\begin{prop}(Local monodromy theorem) With notations as above, the $L_{\infty}^{\BK}$-linear operator $\nabla^{(\pi)}$ satisfies
$$(\nabla^{(\pi)})^{m+1}\verb+|+ D_{\pi\text{-}\Sen}(V_{L}^{m})=0.$$
Furthermore, if we put $h^{s,t}=\dim_{L_{\infty}^{\BK}}D^{s,t}_{\pi\text{-}\Sen}(V_{L}^{m})$ and define  $h_{m}=\sup\verb+{+b-a\verb+|+\forall i\in [a,b], h^{i,m-i}\not=0\verb+}+$, we have
$$(\nabla^{(\pi)})^{h_{m}+1}\verb+|+ D_{\pi\text{-}\Sen}(V_{L}^{m})=0.$$
\end{prop}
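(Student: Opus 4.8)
The plan is to deduce both formulas from the transversality of Proposition 4.1 together with the classical bound on the Hodge--Tate weights of \'etale cohomology. First I would iterate Proposition 4.1: since $\nabla^{(\pi)}(D^{s,t}_{\pi\text{-}\Sen}(V_{L}^{m}))\subset D^{s-1,t+1}_{\pi\text{-}\Sen}(V_{L}^{m})$, one obtains $(\nabla^{(\pi)})^{k}(D^{s,t}_{\pi\text{-}\Sen}(V_{L}^{m}))\subset D^{s-k,\,t+k}_{\pi\text{-}\Sen}(V_{L}^{m})$ for every $k\geq 0$. Next I would invoke the fact that the Hodge--Tate weights of $V^{m}=H^{m}_{\text{\'et}}(X\otimes_{K}\overline{K},\mathbb{Q}_{p})$, and hence of its restriction $V_{L}^{m}$, lie in $\{0,1,\ldots,m\}$ (a standard consequence of the Hodge--Tate decomposition of $H^{m}_{\text{\'et}}$); by Proposition 3.4 this says precisely that $D^{s,t}_{\pi\text{-}\Sen}(V_{L}^{m})=0$ unless $0\leq t\leq m$. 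Then for $x\in D^{s,t}_{\pi\text{-}\Sen}(V_{L}^{m})$ we have $t\geq 0$, so $t+(m+1)>m$ and therefore $(\nabla^{(\pi)})^{m+1}(x)\in D^{s-m-1,\,t+m+1}_{\pi\text{-}\Sen}(V_{L}^{m})=0$; running over the direct summands of $D_{\pi\text{-}\Sen}(V_{L}^{m})=\bigoplus_{s+t=m}D^{s,t}_{\pi\text{-}\Sen}(V_{L}^{m})$ gives $(\nabla^{(\pi)})^{m+1}|_{D_{\pi\text{-}\Sen}(V_{L}^{m})}=0$.

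For the sharper bound the additional point I would use is that $\nabla^{(\pi)}$ annihilates a summand $D^{s,t}_{\pi\text{-}\Sen}(V_{L}^{m})$ as soon as the target $D^{s-1,t+1}_{\pi\text{-}\Sen}(V_{L}^{m})$ vanishes, because its image must lie there. Fix a nonzero summand $D^{s,t}_{\pi\text{-}\Sen}(V_{L}^{m})$, so $s+t=m$ and $h^{s,t}\neq 0$, and let $[a,b]$ be the largest interval of integers with $s\in[a,b]$ and $h^{i,m-i}\neq 0$ for all $i\in[a,b]$. By maximality of $[a,b]$ --- or, when $a$ is already the smallest occurring index, simply because $a-1$ then lies outside $\{0,\ldots,m\}$ --- we have $h^{a-1,m-a+1}=0$, i.e.\ $D^{a-1,m-a+1}_{\pi\text{-}\Sen}(V_{L}^{m})=0$. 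Hence $(\nabla^{(\pi)})^{s-a}$ carries $D^{s,t}_{\pi\text{-}\Sen}(V_{L}^{m})$ into $D^{a,m-a}_{\pi\text{-}\Sen}(V_{L}^{m})$ and one more application lands in the zero space $D^{a-1,m-a+1}_{\pi\text{-}\Sen}(V_{L}^{m})$, so $(\nabla^{(\pi)})^{s-a+1}$ kills $D^{s,t}_{\pi\text{-}\Sen}(V_{L}^{m})$. Since $s-a+1\leq b-a+1\leq h_{m}+1$ by the definition of $h_{m}$, it follows that $(\nabla^{(\pi)})^{h_{m}+1}$ kills $D^{s,t}_{\pi\text{-}\Sen}(V_{L}^{m})$ as well; summing over the decomposition yields $(\nabla^{(\pi)})^{h_{m}+1}|_{D_{\pi\text{-}\Sen}(V_{L}^{m})}=0$. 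Note in particular that this recovers the first formula, since $h_{m}\leq m$.

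I do not expect a genuine obstacle here: all the substantive input --- the construction of $D_{\pi\text{-}\Sen}$, the nilpotence of $\nabla^{(\pi)}$ (Proposition 3.7), the commutator relation, and the transversality (Propositions 3.8 and 4.1) --- is already in place, and the only external ingredient, that the Hodge--Tate weights of $H^{m}_{\text{\'et}}$ lie in $[0,m]$, is classical. The one thing to handle with a little care is the combinatorial bookkeeping for the refined bound: correctly isolating the maximal run $[a,b]$ of consecutive nonvanishing graded pieces through the chosen one and verifying the vanishing at its left endpoint, including the boundary case in which the run reaches an extremal weight, where $h^{a-1,m-a+1}=0$ holds for the trivial reason that the index leaves the admissible range $\{0,\ldots,m\}$.
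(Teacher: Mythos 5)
Your argument is correct and is exactly the route the paper intends: it offers no written proof beyond the remark that the claim follows ``by the same argument'' as Proposition 4.1, i.e.\ by iterating the weight-shifting property of $\nabla^{(\pi)}$ (Proposition 3.8) and using that the Hodge--Tate weights of $H^{m}_{\text{\'et}}$ lie in $[0,m]$. Your write-up simply supplies the details the paper omits, including the correct combinatorial bookkeeping for the refined bound via the maximal run of nonvanishing $h^{i,m-i}$.
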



\begin{thebibliography}{GNA}


\bibitem[Be1]{Berger1} Berger, L.:{\it Repr\'esentations $p$-adiques et \'equations diff\'erentielles.} Invent. Math. 148 (2002), 219--284. 

\bibitem[Be2]{Berger2}  Berger, L.:{\it An introduction to the theory of $p$-adic representations.} Geometric aspects of Dwork theory. Vol. I, II, 255--292, 2004. 

\bibitem[Bre]{Breuil} Breuil, C.:{\it Une application de corps des normes.} Compositio Math. 117 (1999), no. 2, 189--203.

\bibitem[Bri]{Brinon} Brinon, O.:{\it Une generalisation de la theorie de Sen.} Math. Ann. 327 (2003), 793--813.

\bibitem[C]{Colmez1} Colmez, P.:{\it Les conjectures de monodromie $p$-adiques.} S\'eminaire Bourbaki. Vol. 2001/2002. Ast\'erisque No. 290 (2003), Exp. No. 897, 53--101.

\bibitem[D]{Deligne} Deligne, P.:{\it Equations differentielles a points singuliers reguliers.} Lecture Notes in Mathematics, Vol. 163. Springer-Verlag, Berlin-New York, 1970. 

\bibitem[F1]{Fontaine1} Fontaine, J-M.:{\it Le corps des p\'eriodes $p$-adiques.} P\'eriodes $p$-adiques (Bures-sur-Yvette, 1988). Ast\'erisque No. 223 (1994), 59--111. 

\bibitem[F2]{Fontaine2} Fontaine, J-M.:{\it Repr\'esentations $p$-adiques semi-stables.} P\'eriodes $p$-adiques (Bures-sur-Yvette, 1988). Ast\'erisque No. 223 (1994), 113--184. 

\bibitem[F3]{Fontaine3} Fontaine, J-M.:{\it Repr\'esentations $l$-adiques potentiellement semi-stables.} P\'eriodes $p$-adiques (Bures-sur-Yvette, 1988). Ast\'erisque No. 223 (1994), 321--347. 

\bibitem[H]{Hyodo} Hyodo, O.:{\it On variation of Hodge-Tate structures.} Math. Ann. 284 (1989), 7--22. 

\bibitem[I]{Illusie} Illusie, L.:{\it Autour du theoreme de monodromie locale.} Periodes $p$-adiques (Bures-sur-Yvette, 1988).  Asterisque  No. 223  (1994), 9--57.

\bibitem[Ka]{Katz} Katz, N.:{\it Nilpotent connections and the monodromy theorem: Applications of a result of Turrittin.} Inst. Hautes Etudes Sci. Publ. Math. No. 39 (1970), 175--232. 

\bibitem[Ki]{Kisin} Kisin, M.:{\it Crystalline representations and $F$-crystals.}  Algebraic geometry and number theory,  459--496, Progr. Math., 253, Birkhauser Boston, Boston, MA, 2006. 

\bibitem[M1]{Morita1} Morita, K.:{\it Hodge-Tate and de Rham representations in the imperfect residue field case.} Ann. Sci. Ec. Norm. Super. (4) 43 (2010), no. 2, 341--356.

\bibitem[M2]{Morita1} Morita, K.:{\it Crystalline and semi-stable representations in the imperfect residue field case.} Submitted.

\bibitem[P-S]{Peters-Steenbrink} Petets, C.; Steenbrink, J.:{\it Mixed Hodge structures.} Ergebnisse der Mathematik und ihrer Grenzgebiete. 3. Folge. A Series of Modern Surveys in Mathematics, 52. Springer-Verlag, Berlin, 2008.

\bibitem[S1]{Sen1} Sen, S.:{\it Ramification in $p$-adic Lie extensions.} Invent. Math. 17 (1972), 44--50. 

\bibitem[S2]{Sen2} Sen, S.:{\it Lie algebras of Galois groups arising from Hodge-Tate modules.} Ann. of Math. (2) 97 (1973), 160--170.

\bibitem[S3]{Sen3} Sen, S.:{\it Continuous cohomology and $p$-adic Galois representations.} Invent. Math. 62 (1980/81), 89--116. 

\bibitem[Tj1]{Tsuji1} Tsuji, T.:{\it $p$-adic etale cohomology and crystalline cohomology in the semi-stable reduction case.} Invent. Math. 137 (1999), 233--411. 

\bibitem[Tj2]{Tsuji2} Tsuji, T.:{\it Semi-stable conjecture of Fontaine-Jannsen: a survey.} Cohomologies $p$-adiques et applications arithmetiques, II.  Asterisque  No. 279  (2002), 323--370.

\bibitem[V1]{Voisin1} Voisin, C.:{\it Hodge theory and complex algebraic geometry. I.} Translated from the French by Leila Schneps. Cambridge Studies in Advanced Mathematics, 76. Cambridge University Press, Cambridge, 2007.

\bibitem[V2]{Voisin2} Voisin, C.:{\it Hodge theory and complex algebraic geometry. II.} Translated from the French by Leila Schneps. Cambridge Studies in Advanced Mathematics, 77. Cambridge University Press, Cambridge, 2007.
\end{thebibliography}
\end{document}